\newtheorem{thm}{Theorem}[section]
\newtheorem{lem}[thm]{Lemma}
\theoremstyle{definition}
\newtheorem{defn}[thm]{Definition}
\theoremstyle{remark}
\newtheorem{rem}[thm]{Remark}
\numberwithin{equation}{section}
\begin{document}

\title{\bfseries\textrm{Noise Leads to Quasi-Consensus of Hegselmann-Krause Opinion Dynamics*} \footnotetext{*This research was supported by the National Key Basic
Research Program of China (973 program) under grant 2014CB845301/2/3,
and by the National Natural Science Foundation of China under grants No. 91427304 and 61333001}}

\author{Wei Su,\thanks{School of Science, Beijing Jiaotong University, Beijing 100044, China, {\tt
suwei@amss.ac.cn}} \and Ge Chen,\thanks{National Center for Mathematics and Interdisciplinary Sciences \& Key Laboratory of Systems and
Control, Academy of Mathematics and Systems Science, Chinese Academy of Sciences, Beijing 100190,
China, {\tt
chenge@amss.ac.cn}}
\and Yiguang Hong,\thanks{Key Laboratory of Systems and
Control, Academy of Mathematics and Systems Science, Chinese Academy of Sciences, Beijing 100190,
China, {\tt
yghong@iss.ac.cn}}
}

%
\date{}%
\maketitle
\begin{abstract}
This paper aims at providing rigorous theoretical analysis to investigate the consensus behavior of opinion dynamics in noisy environments.
It is known that the well-known
Hegselmann-Krause (HK) opinion dynamics demonstrates various agreement or disagreement behaviors in the deterministic case.  Here we strictly show how noises provide great help to ``synchronize"
the opinions of the HK model. In fact, we prove a ``critical phenomena'' of the noisy HK dynamics, that is, the opinions merge as a
quasi-consensus in finite time in noisy environment when the noise strength is below a critical value, which implies the
fragmentation phenomenon of the HK dynamics could eventually vanish in the presence of noise. On the other hand, the opinions almost surely diverge when the noise strength
exceeds the critical value.
\end{abstract}

\textbf{Keywords}: Random noise, quasi-consensus, Hegselmann-Krause model, opinion dynamics, multi-agent systems
\section{Introduction}

In the past decades, methods of multi-agent systems have been used
to investigate the collective behavior in natural and social
systems, and several multi-agent models have been proposed. One of the central issues in the
study of multi-agent systems is the consensus (or called agreement or synchronization) of the collective
behavior and various consensus criteria have been developed mainly based on the
graph theory and energy function techniques \cite{Jad2003,
Ren2005, Hong2007, Chazelle2011, Chen2014}.

Opinion dynamics are important multi-agent systems to describe the evolution and spread of social opinions, which are deeply involved with
our daily lives, and their analysis has attracted
more and more interests in many research areas, even in our control
society \cite{Castellano2009, Nicholson1992,
frie, degro,Deffuant2000,hegselmann2002, Lorenz2007, Zhang2013, tempo,Jia2015}.
To investigate opinion dynamics, various confidence-based multi-agent models have been recently proposed \cite{Deffuant2000,hegselmann2002}. One of the famous confidence-based models, called the
Hegselmann-Krause (HK) model or Krause model \cite{krause, hegselmann2002, hk2005},
where each agent has a bounded confidence (that is, each agent only
considers the opinions from its neighbor agents whose opinion values
lie within its confidence range), effectively reveals the basic mechanisms of opinion evolution, and presents rich opinion phenomena, including agreement/consensus,
polarization, and fragmentation, which are widely
found in social networks. Since the inter-agent graphs of the
HK model are state-dependent and determined by the confidence bound,
recent studies on the HK model have been mainly based on simulations
and stochastic process analysis (see \cite{Lorenz2007, Fortuna2005,
Blondel2009} or the reference therein).

It is noticed that the classical HK model is deterministic, that is, once the initial opinion value is given, the opinions will
evolve in a deterministic way. However, the actual individuals' opinions are inevitably influenced by the
randomness during opinion transmission and evolution, due to agents'
``free-will'' or information influence from social media or private sources.  Therefore,
randomness becomes an essential factor in opinion
dynamics in reality and has been studied from various viewpoints \cite{Pineda2015, Mas2010,Pineda2011,Grauwin2012,Carro2013, Pineda2013}.   Most of these studies found an interesting phenomenon that the random noise in some situations could play a positive role in enhancing the consensus or
reducing the disagreement of opinions, which implies an important potential strategy to induce the agreement of social opinions.

Although there exist many simulations on the noisy HK models (see \cite{Pineda2013}), to the best of our knowledge, there has almost been no strict theoretical analysis to this interesting phenomenon up to now. The main obstacle of establishing the complete analysis lies in that the state-dependent topologies of the evolving opinions become even more elusive under the influence of the random noise.  At the same time,
the results concerned with random effect or disturbance to the consensus of multi-agent
systems \cite{Wang2008, Touri2009, Kar2009,Touri2014}, which mainly focused on the cases
with state-independent graph topologies or given connectivity conditions of graphs (balance, joint connectivity, etc.), are not directly applicable to the complex behaviors of the
HK dynamics with noises.

The main contribution in this paper is to establish the rigorous mathematical
analysis on how random noises influence the opinion ``consensus" for the confidence-based HK opinion model.
Stepping further than some of the previous simulation results, we strictly prove a ``critical phenomenon'' that, for
any initial opinion values and any confidence threshold, the noisy
HK model will achieve quasi-consensus (a consensus concept defined for the
noisy case) in finite time when the noise strength is not
larger than half of confidence threshold, while almost surely
diverges otherwise.  This result possesses its significance in not only revealing
the evolution mechanism of social opinions, but also the application of designing intervention strategy to induce the social opinion agreement
by injecting random noise to opinion groups.

The organization of the paper is as follows.  Section \ref{Mod_sec} introduces the noisy HK
model we investigate, while Section \ref{Cri_sec} proposes an important result of quasi-consensus for i.i.d. noise.
Then Sections \ref{Suff_sec} and \ref{Nece_sec} give a sufficient and a necessary condition for quasi-consensus, respectively, with general noise.
Section \ref{Simulations} provides numerical simulations for illustration, and finally, Section \ref{Conclusions}
concludes this paper.

\section{HK Models}\label{Mod_sec}
\renewcommand{\thesection}{\arabic{section}}


It is known that the original HK model assumes the following evolution dynamics  \cite{hegselmann2002}:
\begin{equation}\label{HKnoiseless}
  x_i(t+1)=|\mathcal{N}(i, x(t))|^{-1}\sum_{j\in \mathcal{N}(i, x(t))}x_j(t),
\end{equation}
where $i\in\mathcal{V}=\{1,2,\ldots,n\}$, $x_i(t)\in [0, 1]$ is the opinion value of agent $i$ at time $t$ and
\begin{equation}\label{neigh}
 \mathcal{N}(i, x(t))=\{1\leq j\leq n\; \big|\; |x_j(t)-x_i(t)|\leq \epsilon\}
\end{equation}
is the neighbor set of agent $i$ at $t$ with $\epsilon\in (0, 1]$ representing the confidence threshold (interaction radius) and $|S|$ denoting the cardinal number of a set $S$. Though it seems to be simple, the HK model (\ref{HKnoiseless}) captures a common evolution mechanism of many practical systems and is also able to present rich phenomena of opinion dynamics.
Sometimes, all opinions can gather in one cluster and the opinion dynamics achieves consensus or agreement, but very often, opinions are split into more than one clusters, called fragmentation phenomenon, which yields the disagreement of social opinions in another aspect. A standard opinion evolution can be found in Fig. 1, where the opinion fragmentation appears with 3 subgroups.
\begin{figure}[ht]
  \centering
  \includegraphics[width=2.5in]{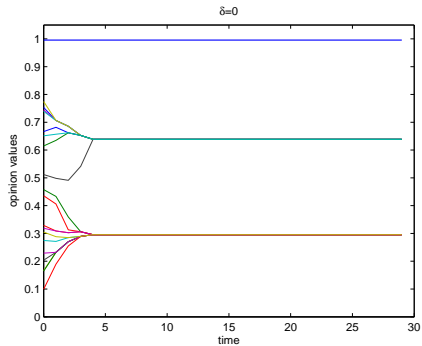}\\
  \caption{Opinion evolution of system (\ref{HKnoiseless}) with 20 agents. The initial states are randomly generated from the interval [0,1] and confidence threshold is chosen so that the opinions are divided into 3 subgroups. }\label{smallnoise0}
\end{figure}

In reality, apart from the random factor of one's free will, the individuals receive information from media and change their opinions.   In some sense, the noises may be formulated to be additive to the original opinions.
Since there are extreme opinions to bound the whole opinion dynamics, people still assume that the noisy opinion values in an opinion dynamics are limited in a bounded closed interval (as in \cite{Mas2010, Pineda2013}), say $[0, 1]$ without of loss of generality.  Therefore, the noisy HK model can be described as follows:
\begin{equation}\label{HKnoise0}
  x_i(t+1)=\left\{
           \begin{array}{ll}
             1,  & \hbox{~$x_i^*(t)>1~$} \\
             x_i^*(t),  & \hbox{~$x_i^*(t)\in[0, 1]~$} \\
             0,  & \hbox{~$x_i^*(t)<0~$}
           \end{array},~\forall i\in\mathcal{V}, t\geq 0,
         \right.
\end{equation}
where
\begin{equation}\label{xit}
  x_i^*(t)=|\mathcal{N}(i, x(t))|^{-1}\sum\limits_{j\in \mathcal{N}(i, x(t))}x_j(t)+\xi_i(t+1)
\end{equation}
with $\{\xi_i(t)\}_{i\in\mathcal{V},t> 0}$ being the random noises.

\section{Critical Noise for Quasi-Consensus}\label{Cri_sec}

To study the noisy HK model (\ref{HKnoise0})-(\ref{xit}), we slightly modify the well-known consensus concept as follows.  According to the HK model, once all the opinions locate within a confidence threshold, the agents will form a cluster and share the same average opinion in the next time.  Therefore, we define the concept of \emph{quasi-consensus} of system (\ref{HKnoise0})-(\ref{xit}) as follows.

\begin{defn}\label{robconsen}
Define
\begin{equation*}\label{opindist}
  d_{\mathcal{V}}(t)=\max\limits_{i, j\in \mathcal{V}}|x_i(t)-x_j(t)|~~\mbox{and}~~d_{\mathcal{V}}=\limsup\limits_{t\rightarrow \infty}d_{\mathcal{V}}(t).
\end{equation*}
(i) If $d_{\mathcal{V}} \leq \epsilon$, we say the system (\ref{HKnoise0})-(\ref{xit}) will reach quasi-consensus.\\
(ii) If $P\{d_{\mathcal{V}} \leq \epsilon\}=1$, we say almost surely (a.s.) the system (\ref{HKnoise0})-(\ref{xit}) will reach quasi-consensus.\\
(iii) If  $P\{d_{\mathcal{V}} \leq \epsilon\}=0$, we say a.s. the system (\ref{HKnoise0})-(\ref{xit}) cannot reach quasi-consensus.\\
(iv) Let $T=\min\{t: d_{\mathcal{V}}(t')\leq \epsilon \mbox{ for all } t'\geq t\}$.
 If $P\{T<\infty\}=1$, we say a.s. the system (\ref{HKnoise0})-(\ref{xit}) reaches quasi-consensus in finite time.
\end{defn}

For simplicity, we first give a result for the simplest i.i.d. noise. In fact, this noise condition will be relaxed in the following two sections, that is, Sections
\ref{Suff_sec} and \ref{Nece_sec}, where a necessary and a sufficient condition for quasi-consensus are provided, respectively.

\begin{thm}\label{robconsthm0}
Suppose that the noises $\{\xi_i(t)\}_{i\in\mathcal{V},t\geq 1}$ are zero-mean and non-degenerate random variables with independent and identical distribution and $E\xi_1^2(1)<\infty$.
When the initial state $x(0)\in [0,1]^n$ is arbitrarily given, we have:\\
 (i) if $P\{|\xi_1(1)|\leq \epsilon/2\}=1$, then a.s. the system (\ref{HKnoise0})-(\ref{xit}) will reach quasi-consensus in finite time; \\
 (ii) with the confidence threshold  $\epsilon\in(0,1/3]$, if
    $P\{\xi_1(1)>\epsilon/2\}>0$ and $P\{\xi_1(1)<-\epsilon/2\}>0$,
then a.s. the system (\ref{HKnoise0})-(\ref{xit}) cannot reach quasi-consensus.
\end{thm}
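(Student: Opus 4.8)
The plan is to handle the two parts separately, in each case reducing to a one-block estimate that is uniform in the current configuration and then exploiting the time-homogeneous Markov structure of $\{x(t)\}$ (which holds because the noises are i.i.d.). Throughout I will use the following \emph{absorbing-cluster identity}: if $d_{\mathcal V}(t)\le\epsilon$, then every agent's neighbour set is all of $\mathcal V$, so $x_i(t+1)=\Pi_{[0,1]}\!\big(\bar x(t)+\xi_i(t+1)\big)$, where $\bar x(t)$ is the common average and $\Pi_{[0,1]}$ is the ($1$-Lipschitz) truncation onto $[0,1]$; thus one step ``collapses'' the state to $\bar x(t)$ and then ``spreads'' it by a single noise vector.

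For (i), the truncation being $1$-Lipschitz and $|\xi_i|\le\epsilon/2$ a.s. give $d_{\mathcal V}(t+1)\le|\xi_i(t+1)|+|\xi_j(t+1)|\le\epsilon$ whenever $d_{\mathcal V}(t)\le\epsilon$, so by induction $\{d_{\mathcal V}\le\epsilon\}$ is absorbing and it suffices to reach it once. Let $b>0$ be the essential supremum of $\xi_1(1)$ and fix $\rho\in(0,b)$ with $P\{\xi_1(1)>b-\rho\}>0$, and put $N=\lceil 1/(b-\rho)\rceil$. On the event that all $nN$ noises in a window of $N$ consecutive steps exceed $b-\rho$, one has $x_i^*(t)\ge m(t)+(b-\rho)$ with $m(t):=\min_j x_j(t)$ (the HK average is a convex combination, hence $\ge m(t)$), and since the truncation at $1$ only helps, $m$ increases by at least $b-\rho$ per step; after $N$ steps every opinion equals $1$, hence $d_{\mathcal V}\le\epsilon$ — and this conclusion holds irrespective of the configuration at the start of the window. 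Partitioning time into consecutive length-$N$ windows, the corresponding events are i.i.d.\ with a common positive probability, so a.s.\ one of them occurs, which gives $T<\infty$ a.s.

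For (ii), I would first isolate the key recurrence lemma: there are $N_0\in\mathbb N$, $p_0>0$, $\delta_0>0$ such that $P\{d_{\mathcal V}(t+N_0)\ge\epsilon+\delta_0\mid\mathcal F_t\}\ge p_0$ for every $t$ and every configuration. Granting it, apply it along $t=kN_0$; since the conditional probabilities are bounded below by $p_0$, the conditional Borel--Cantelli lemma yields $d_{\mathcal V}(kN_0)\ge\epsilon+\delta_0$ infinitely often a.s., hence $d_{\mathcal V}=\limsup_t d_{\mathcal V}(t)\ge\epsilon+\delta_0>\epsilon$ and $P\{d_{\mathcal V}\le\epsilon\}=0$. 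To prove the lemma I construct an explicit positive-probability ``script'' using only the noises in steps $t+1,\dots,t+N_0$, relying on the fact that any support points $v^+>\epsilon/2$ and $v^-<-\epsilon/2$ of $\xi_1(1)$ satisfy $v^+-v^->\epsilon$. By the absorbing-cluster identity it is enough to steer the common mean to a value $v$ from which a single noise step produces diameter $>\epsilon+\delta_0$ with positive probability. Three sub-cases: (a) if $\xi_1(1)$ has a support point $\tilde v^+>\epsilon$, push every opinion down to $0$ (as in part (i), with noise concentrated near $v^-$), then in one step take $\xi_1\approx\tilde v^+$, $\xi_2\approx v^-$, putting $x_1$ above $\epsilon$ and $x_2$ at $0$; (b) symmetrically if $\xi_1(1)$ has a support point $<-\epsilon$, push everything up to $1$; (c) otherwise $\xi_1(1)\in[-\epsilon,\epsilon]$ a.s., so pick support points $v^+\in(\epsilon/2,\epsilon]$, $v^-\in[-\epsilon,-\epsilon/2)$, push everything to $0$, then apply $\lceil -v^-/v^+\rceil$ ``tight upward pushes'' (all noises within a small $\rho$ of $v^+$); because $2v^++|v^-|\le 3\epsilon\le 1$ and the accumulated cluster width stays below $\epsilon$, this drives the common mean into $[\,|v^-|,\,1-v^+\,]$ with no truncation ever occurring, and a final step with $\xi_1\approx v^+$, $\xi_2\approx v^-$ gives $x_1-x_2\ge (v^+-v^-)-O(\rho)>\epsilon$. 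Every step in each sub-case has probability bounded below, so $p_0>0$; this is exactly where $\epsilon\le 1/3$ is used — it makes the ``flat'' region $[\,|v^-|,\,1-v^+\,]$ of the truncated map nonempty and reachable by increments of size $\le\epsilon$.

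I expect the main obstacle to be sub-case (c), the bounded-noise regime: from a cluster sitting near $0$ or $1$ the truncation destroys any spread one tries to create, so the common mean genuinely must be moved into the interior, and reconciling the bookkeeping of the accumulating cluster width and the drift of the mean with the requirement ``no truncation during the decoupling step'' — together with verifying this is possible precisely under $\epsilon\le 1/3$ — is the delicate part. The corner-pushing arguments, the absorbing-cluster identity, and the Markov / Borel--Cantelli wrap-ups are routine by comparison.
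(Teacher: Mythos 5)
Your proposal is correct, but it takes a genuinely different route from the paper on both halves (the paper proves this theorem by specializing Theorems \ref{Suff_thm} and \ref{Nece_thm}). For (i), the paper contracts the diameter symmetrically: it extracts $a,p>0$ with $P\{\xi\geq a\}\geq p$ and $P\{\xi\leq -a\}\geq p$ from non-degeneracy, pushes agents whose local average lies in the lower half upward and those in the upper half downward so that $d_{\mathcal V}$ drops by $2a$ with probability at least $p^n$ per step, and iterates $\lceil(1-\epsilon)/(2a)\rceil$ times; you instead drive the whole configuration onto the boundary $1$ using the truncation and a one-sided tail event near the essential supremum. Both arguments rest on the same absorbing-cluster observation (the paper's Lemma \ref{robconspeci}) and the same ``fixed positive success probability in each disjoint window'' wrapper, but your mechanism needs only one tail and is arguably simpler. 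For (ii) the divergence is more substantial: the paper conditions on the quasi-consensus event and invokes the law of the iterated logarithm (Lemma \ref{xvalergo}) to show the clustered common opinion visits $[\epsilon,1-\epsilon]$ infinitely often, where a single step splits it with probability at least $q^n$; you instead prove an unconditional, uniform-in-configuration minorization (collapse to a corner, drift the mean into the untruncated region, then split) and finish with conditional Borel--Cantelli. Your route dispenses with the LIL --- and hence with $E\xi_1^2(1)<\infty$ and the zero-mean hypothesis, neither of which your argument uses --- avoids the delicate conditioning manipulations around (\ref{deparcase1}), and localizes the role of $\epsilon\leq 1/3$ precisely: $2v^++|v^-|\leq 3\epsilon\leq 1$ makes the interval $[\,|v^-|,\,1-v^+]$ of length at least $v^+$, so your arithmetic progression of means must land in it. The case (c) bookkeeping you flagged does close, including the sub-cases where the final step is truncated, since then one endpoint sits at $0$ or $1$ and the resulting gap is still at least $v^++|v^-|-O(\rho)>\epsilon$.
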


\begin{proof} Parts (i) and (ii) of this theorem can be deduced directly from the following Theorems \ref{Suff_thm} and \ref{Nece_thm}, respectively. 
\end{proof}

\begin{rem}
Theorem \ref{robconsthm0} shows that $\epsilon/2$ is the critical strength for the noise to induce the quasi-consensus of HK opinion dynamics: (i) states that when the noise strength is not larger than $\epsilon/2$, the noisy HK model (\ref{HKnoise0})-(\ref{xit}) will achieve quasi-consensus in finite time; and (ii) states that if the noise strength has a positive probability to be larger than $\epsilon/2$, the noisy HK model (\ref{HKnoise0})-(\ref{xit}) almost surely cannot reach quasi-consensus.
\end{rem}

\begin{rem}
The condition $\epsilon\in (0,1/3]$ in (ii) is a conservative choice in consideration of the convenience for analysis and better presenting the conclusion, and can be easily extended to $\epsilon\in(0,1)$ with suitable selection about noises.
\end{rem}

To interpret the practical significance of Theorem \ref{robconsthm0}, we consider in a community where each agent updates its opinion value not only by averaging the opinions of its neighbors, but also being slightly adjusted by information inflow from free media. This implies somehow that the free flow of information in a society could enhance the consensus of opinions. On the other hand, we can design opinion intervention strategy to reduce the disagreement by intentionally injecting random noise into the group.

\section{A Sufficient Condition for Quasi-Consensus}\label{Suff_sec}

This section will provide a sufficient condition for quasi-consensus with general independent noise, and the following is the main result.

\begin{thm}\label{Suff_thm}
Suppose $\epsilon\in(0,1]$ and the noise $\xi_i(t)$, $i\in\mathcal{V},t\geq 1$ is independent with each other and  satisfies:
 i) $P\{|\xi_i(t)|\leq \epsilon/2\}=1$;
 ii) There exist constants $a,p\in(0,1)$ such that
$P\{\xi_i(t)\geq a\}\geq p$ and $P\{\xi_i(t)\leq -a\}\geq p$.
Then for any initial state $x(0)\in [0,1]^n$, a.s. the system (\ref{HKnoise0})-(\ref{xit}) will reach quasi-consensus in finite time.
\end{thm}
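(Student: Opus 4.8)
The plan is to separate two facts and then glue them with a Borel--Cantelli argument. The first is an \emph{absorption} property: once all opinions lie in a band of width at most $\epsilon$, they remain in such a band forever, so the stopping time $T$ of Definition~\ref{robconsen} has already elapsed. The second is a \emph{uniform hitting} estimate: from an arbitrary configuration, within a bounded number of steps the system enters such a band with a probability bounded below by a constant independent of the configuration and of time.

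For absorption I would use hypothesis i). If $d_{\mathcal V}(t)\le\epsilon$, then $|x_i(t)-x_j(t)|\le\epsilon$ for all $i,j$, so $\mathcal N(i,x(t))=\mathcal V$ for every $i$, and by (\ref{xit}) we have $x_i^*(t)=\bar x(t)+\xi_i(t+1)$ with $\bar x(t):=\frac1n\sum_{j}x_j(t)$ the same for every $i$. Since $|\xi_i(t+1)|\le\epsilon/2$ almost surely, the numbers $\{x_i^*(t)\}_i$ lie in an interval of length at most $\epsilon$; the truncation $y\mapsto\max\{0,\min\{1,y\}\}$ is nondecreasing and $1$-Lipschitz, hence it commutes with $\max$ and $\min$ and cannot increase the spread, so $d_{\mathcal V}(t+1)\le\epsilon$. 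Iterating gives $d_{\mathcal V}(s)\le\epsilon$ for all $s\ge t$, i.e.\ $T\le t$ on this event.

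For the hitting estimate I would use hypothesis ii) to ``herd'' every agent to the endpoint $1$ (only the positive-tail half of ii) is needed; the negative-tail half would symmetrically herd to $0$). Write $m(t)=\min_j x_j(t)$ and $\bar x_i(t)=|\mathcal N(i,x(t))|^{-1}\sum_{j\in\mathcal N(i,x(t))}x_j(t)$; since $\bar x_i(t)\ge m(t)$, on the event $E_t=\{\xi_i(t+1)\ge a \text{ for all } i\in\mathcal V\}$ we get $x_i^*(t)\ge m(t)+a>0$, and the truncation then yields $m(t+1)\ge\min\{m(t)+a,\,1\}$. Consequently, if $E_{t_0},E_{t_0+1},\dots,E_{t_0+k-1}$ all occur with $k:=\lceil 1/a\rceil$, then $m$ climbs to $1$, all opinions equal $1$, and $d_{\mathcal V}(t_0+k)=0$. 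By independence of the noises, $P(E_{t_0}\cap\cdots\cap E_{t_0+k-1})\ge p^{nk}=:\delta>0$, a bound uniform in $t_0$ and in the current state.

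To conclude, I would partition time into consecutive blocks of length $k$; the block-success events (all noises $\ge a$ throughout the block) are independent, each of probability at least $\delta$, so $P(\text{no block-success in the first }N\text{ blocks})\le(1-\delta)^N\to0$ and almost surely some block succeeds. On that block the hitting estimate gives $d_{\mathcal V}=0$ at its end, and absorption then forces $d_{\mathcal V}(\cdot)\le\epsilon$ thereafter, so $T<\infty$ almost surely. I expect the only delicate points to be bookkeeping: verifying that truncation at $0$ and $1$ never increases the spread and never drags the minimum downward, and confirming that $\delta$ is genuinely independent of the reachable state so the blocks behave as i.i.d.\ trials. The conceptual step that makes everything uniform is the choice to drive the whole population to a fixed extreme point rather than toward a moving interior target.
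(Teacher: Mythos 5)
Your proof is correct. The absorption half is exactly the paper's Lemma~\ref{robconspeci} (your explicit observation that the clipping to $[0,1]$ is nondecreasing and $1$-Lipschitz, hence cannot increase the spread, is a point the paper glosses over), and the overall architecture --- a uniform positive lower bound on reaching $d_{\mathcal V}\le\epsilon$ within a fixed window, followed by independent geometric trials over disjoint time blocks --- matches the paper's. But your hitting estimate takes a genuinely different route. The paper contracts the diameter symmetrically: at each step it splits $\mathcal V$ into the agents whose local average lies in the lower half $[x_{\min}(t),x_{\min}(t)+d_{\mathcal V}(t)/2]$ and those in the upper half, asks the former to receive noise $\ge a$ and the latter noise $\le -a$, claims $d_{\mathcal V}(t+1)\le d_{\mathcal V}(t)-2a$ with probability at least $p^n$, and iterates $L=\lceil(1-\epsilon)/(2a)\rceil$ times. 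You instead herd the entire population to the endpoint $1$ by requiring all $n$ noises to exceed $a$ for $\lceil 1/a\rceil$ consecutive steps, so that the minimum opinion ratchets up by $a$ per step until everything is clipped to $1$ and the diameter collapses to $0$. Your version uses only one tail of hypothesis ii), and it avoids the delicate bookkeeping in the paper's contraction step (one has to check that the downward-pushed upper group and the upward-pushed lower group actually end up in an interval of length $d_{\mathcal V}(t)-2a$, which involves both the $\epsilon/2$ bound and the location of the local averages); the price is that your argument leans entirely on the truncation at the boundary of $[0,1]$ and would not transfer to an unclipped state space, whereas the paper's interior contraction is in principle boundary-free. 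Both yield a state-independent constant ($p^{nk}$ versus $p^{n(L+1)}$) and the same Borel--Cantelli conclusion.
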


To prove this result we need introduce some lemmas first.
The following result is quite straightforward, which was used in \cite{Blondel2009}.

\begin{lem}\label{monosmlem}
Suppose $\{z_i, \, i=1, 2, \ldots\}$ is a nonnegative nondecreasing (nonincreasing) sequence.  Then for any integer $n\geq 0$, the sequence
$g_n(k)=\frac{1}{k}\sum_{i=n+1}^{n+k}z_i$, $k\geq 1$, is monotonously nondecreasing (nonincreasing) with respect to $k$.
\end{lem}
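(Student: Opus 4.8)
The plan is to prove Lemma~\ref{monosmlem} directly from the definition of $g_n(k)$ by comparing consecutive terms $g_n(k)$ and $g_n(k+1)$. Fix the integer $n\geq 0$ and assume first that $\{z_i\}$ is nonnegative and nondecreasing. I would write out
\[
g_n(k+1)-g_n(k)=\frac{1}{k+1}\sum_{i=n+1}^{n+k+1}z_i-\frac{1}{k}\sum_{i=n+1}^{n+k}z_i,
\]
put the two fractions over the common denominator $k(k+1)$, and collect terms. The numerator becomes $k\sum_{i=n+1}^{n+k+1}z_i-(k+1)\sum_{i=n+1}^{n+k}z_i = k\,z_{n+k+1}-\sum_{i=n+1}^{n+k}z_i$. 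Since $z_i\leq z_{n+k+1}$ for every $i$ with $n+1\leq i\leq n+k$ (by monotonicity), we get $\sum_{i=n+1}^{n+k}z_i\leq k\,z_{n+k+1}$, so the numerator is nonnegative, hence $g_n(k+1)\geq g_n(k)$. This shows $g_n(k)$ is nondecreasing in $k$.

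For the nonincreasing case, the argument is symmetric: the same computation gives numerator $k\,z_{n+k+1}-\sum_{i=n+1}^{n+k}z_i$, and now $z_i\geq z_{n+k+1}$ for $n+1\leq i\leq n+k$, so the numerator is nonpositive and $g_n(k+1)\leq g_n(k)$. Alternatively, one can note that if $\{z_i\}$ is nonincreasing then $\{-z_i+C\}$ is nondecreasing for suitable $C$, but the direct computation is just as short and avoids fussing over nonnegativity of the shifted sequence, so I would present the direct version. Note that nonnegativity of $\{z_i\}$ is not actually needed for the monotonicity conclusion itself — it is only relevant insofar as the lemma is later applied to nonnegative quantities — so I would either keep the hypothesis as stated (harmless) or remark that it is not used.

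There is no real obstacle here; the only thing to be careful about is the bookkeeping with the index shift by $n$ and getting the common-denominator algebra right, and making sure the inequality direction is tied correctly to the monotonicity direction of $\{z_i\}$. I would keep the proof to three or four lines, handling the nondecreasing case in full and then stating that the nonincreasing case follows by the identical computation with all inequalities reversed.
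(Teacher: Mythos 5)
Your proof is correct: the common-denominator computation reduces the sign of $g_n(k+1)-g_n(k)$ to that of $k\,z_{n+k+1}-\sum_{i=n+1}^{n+k}z_i$, which is controlled exactly by the monotonicity of $\{z_i\}$, and your observation that nonnegativity of the $z_i$ is not actually used is also right. The paper itself gives no proof (it cites the lemma as straightforward and refers to Blondel et al.), and your argument is the standard one that would be supplied, so there is nothing to compare beyond noting that you have filled in the omitted details correctly.
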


\begin{rem}
Lemma \ref{monosmlem} shows that, for the nose-free HK dynamics (\ref{HKnoiseless}), the opinions of all agents keep their order from the beginning of evolution and the boundary opinions change monotonically, which can guarantee the convergence of the HK dynamics. However, these two key properties disappear in the noisy HK dynamics, and our analysis skill here is totally different with that in the noise-free case.
\end{rem}

In what follows, the ever appearing time symbol $t$ (or $T$, etc.) all refer to the random variables $t(\omega)$ (or $T(\omega)$, etc.) on the probability space $(\Omega,\mathcal{F},P)$, and will be still written as $t$ (or $T$, etc.) for simplicity.

\begin{lem}\label{robconspeci}
For protocol (\ref{HKnoise0})-(\ref{xit}) with the conditions of Theorem \ref{Suff_thm}, if a.s. there exists a finite time $T\geq 0$ such that $d_\mathcal{V}(T)\leq \epsilon$, then
$d_\mathcal{V}\leq \epsilon$ a.s..
\end{lem}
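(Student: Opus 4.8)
The plan is to show that the event $\{d_{\mathcal V}(t)\le\epsilon\}$ is \emph{absorbing} for the dynamics: once all opinions lie in a band of width at most $\epsilon$, condition i) of Theorem \ref{Suff_thm} forces them to remain in such a band at every subsequent step, almost surely. Granting this, the lemma follows by a purely pathwise argument: on the probability-one event where a finite $T$ with $d_{\mathcal V}(T)\le\epsilon$ exists, we get $d_{\mathcal V}(t)\le\epsilon$ for all $t\ge T$ and hence $d_{\mathcal V}=\limsup_{t\to\infty}d_{\mathcal V}(t)\le\epsilon$.

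The key is the one-step mechanism. Suppose at some time $t$ we have $d_{\mathcal V}(t)\le\epsilon$, i.e. $|x_j(t)-x_i(t)|\le\epsilon$ for all $i,j$. Then by (\ref{neigh}) every agent sees every other agent, $\mathcal N(i,x(t))=\mathcal V$ for all $i$, so the deterministic part of the update (\ref{xit}) is the \emph{same} quantity $\bar x(t):=\frac1n\sum_{j\in\mathcal V}x_j(t)$ for all $i$. Thus $x_i^*(t)=\bar x(t)+\xi_i(t+1)$, and by condition i) of Theorem \ref{Suff_thm} we have, almost surely, $x_i^*(t)\in[\bar x(t)-\epsilon/2,\ \bar x(t)+\epsilon/2]$ — an interval of length $\epsilon$, independent of $i$. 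Now the truncation map $y\mapsto\min\{\max\{y,0\},1\}$ appearing in (\ref{HKnoise0}) is nondecreasing and $1$-Lipschitz, so it sends this interval into an interval of length at most $\epsilon$; since $x_i(t+1)$ is the image of $x_i^*(t)$, all the $x_i(t+1)$ lie in that common interval, and therefore $d_{\mathcal V}(t+1)\le\epsilon$. Because $\mathcal V$ is finite and $t$ ranges over a countable set, the event on which $|\xi_i(t)|\le\epsilon/2$ holds for every $i$ and every $t$ has probability one; on that event the implication $d_{\mathcal V}(t)\le\epsilon\Rightarrow d_{\mathcal V}(t+1)\le\epsilon$ holds for all $t$, and induction starting from $t=T$ gives $d_{\mathcal V}(t)\le\epsilon$ for all $t\ge T$.

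Finally, intersecting the probability-one event that a finite $T\ge0$ with $d_{\mathcal V}(T)\le\epsilon$ exists (the hypothesis) with the probability-one event that i) holds at all times yields $d_{\mathcal V}\le\epsilon$ a.s. I do not expect a genuine obstacle here; note that condition ii) of Theorem \ref{Suff_thm} plays no role in this lemma. The only point requiring a little care is the behaviour of the truncation near the endpoints $0$ and $1$ — this is precisely why I invoke monotonicity together with the $1$-Lipschitz property of the truncation, rather than a naive claim that ``truncation can only reduce the spread.''
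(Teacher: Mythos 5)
Your proof is correct and takes essentially the same route as the paper's: once $d_{\mathcal V}(t)\le\epsilon$, every agent averages over all of $\mathcal V$, so the pre-truncation updates differ only by the noises, which are bounded by $\epsilon/2$ a.s., and induction from $T$ gives $d_{\mathcal V}(t)\le\epsilon$ for all $t\ge T$. Your explicit handling of the truncation map via monotonicity and the $1$-Lipschitz property is a small refinement of a step the paper leaves implicit.
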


\begin{proof}
Denote $\widetilde{x}_i(t)=|\mathcal{N}(i, x(t))|^{-1}\sum_{j\in \mathcal{N}(i, x(t))}x_j(t)$, $t\geq 0$, and this denotation remains valid for the rest of the context. If $d_\mathcal{V}(T)\leq \epsilon$,
by the definition of $\mathcal{N}(i, x(t))$ we have
\begin{equation}\label{dtitera_0}
  \tilde{x}_i(T)=\frac{1}{n}\sum\limits_{j=1}^{n}x_j(T), \, \, i\in\mathcal{V}.
\end{equation}
Since $|\xi_i(t)|\leq \epsilon/2$ a.s., we obtain
\begin{equation}\label{dtitera}
\begin{split}
  d_\mathcal{V}(T+1) & =\max\limits_{1\leq i, j\leq n}|x_i(T+1)-x_j(T+1)| \\
    & \leq\max\limits_{1\leq i, j\leq n}(|\xi_i(T+1)|+|\xi_j(T+1)|) \\
    & \leq \epsilon.
\end{split}
\end{equation}
Using (\ref{dtitera_0}) and (\ref{dtitera}) repeatedly, we get $d_\mathcal{V}\leq \epsilon$.
\end{proof}

Lemma \ref{robconspeci} means that, once the opinions of all agents are located within the confidence threshold, the noises with strength no more than $\epsilon/2$ cannot separate them any more and the system (\ref{HKnoise0})-(\ref{xit}) will reach quasi-consensus.\vspace{2mm}\\
\textbf{Proof of Theorem \ref{Suff_thm}.}
Define for $t\geq 0$
\begin{equation*}
  x_{\max}(t):=\max\limits_{1\leq i\leq n}x_i(t)\,\,\,\,\,\text{and}\,\,\,\,\,x_{\min}(t):=\min\limits_{1\leq i\leq n}x_i(t).
\end{equation*}
Given any initial condition $x(0)$, if $d_\mathcal{V}(0)=x_{\max}(0)-x_{\min}(0)\leq \epsilon$, then by Lemma \ref{robconspeci}, (\ref{HKnoise0})-(\ref{xit}) is quasi-consensus. Otherwise, $d_\mathcal{V}(0)>\epsilon$. Note that there exist constants $0<a<1$ and $0<p< 1$ such that for $t\geq 1, k\in\mathcal{V}$
\begin{equation}\label{noiseposi}
  P\{\xi_k(t)\geq a\}\geq p\,\,\,\,\text{and}\,\,\,\,P\{\xi_k(t)\leq -a\}\geq p.
\end{equation}
Let $\mathcal{V}_m(t)=\{i\in\mathcal{V}|x_{\min}(t)\leq\tilde{x}_i(t)\leq x_{\min}(t)+d_\mathcal{V}(t)/2\}, \mathcal{V}_M(t)=\{j\in\mathcal{V}|x_{\min}(t)+d_\mathcal{V}(t)/2\leq\tilde{x}_j(t)\leq x_{\max}(t)\}, t\geq 0$. Then for $i\in \mathcal{V}_m(0), j\in\mathcal{V}_M(0)$, we have
\begin{equation*}
\begin{split}
  P\{x_i(1) & \geq \tilde{x}_i(0)+a\}\geq P\{\xi_i(1)\geq a\}\geq p, \\
  P\{x_j(1)  & \leq \tilde{x}_j(0)-a\}\geq P\{\xi_j(1)\leq -a\}\geq p.
\end{split}
\end{equation*}
Thus, by Lemma \ref{monosmlem} and (\ref{noiseposi}), we have
\begin{equation}\label{diffredposi}
\begin{split}
  &P\{d_\mathcal{V}(1)\leq d_\mathcal{V}(0)-2a\} \\
  \geq & P\{\xi_i(1)\geq a, \xi_j(1)\leq -a, \,\,i\in\mathcal{V}_m(0),j\in \mathcal{V}_M(0)\}\\
  \geq & p^n>0.
\end{split}
\end{equation}
Let $L=\lceil\frac{1-\epsilon}{2a}\rceil$, and according to Lemma \ref{robconspeci}, (\ref{HKnoise0})-(\ref{xit}) will reach quasi-consensus once $d_\mathcal{V}(t)\leq \epsilon$ at any time $t$. Then by repeating the above discussion, we get
\begin{equation}\label{Tconsen}
\begin{split}
&P\{d_\mathcal{V}(L+1)\leq \epsilon\}\\
\geq &P\left\{\bigcap\limits_{t=0}^{L}\{d_\mathcal{V}(t+1)\leq d_\mathcal{V}(t)-2a\}\right\}\\
\geq &P\bigg\{\bigcap\limits_{t=0}^{L}\{\xi_i(t+1)\geq a,\xi_j(t+1)\leq -a, i\in\mathcal{V}_m(t),j\in \mathcal{V}_M(t)\}\bigg\}\\
=& \prod\limits_{0\leq t\leq L} P\{\xi_i(t+1)\geq a,\xi_j(t+1)\leq -a, i\in\mathcal{V}_m(t),j\in \mathcal{V}_M(t)\}\\
\geq& p^{n(L+1)}>0.
\end{split}
\end{equation}
Define $U(L)=\{\omega:$ (\ref{HKnoise0})-(\ref{xit}) does not reach quasi-consensus in period $L\}$
and $U=\{\omega:$ (\ref{HKnoise0})-(\ref{xit}) does not reach quasi-consensus in finite time$\}$,
then by (\ref{Tconsen}),
\begin{equation*}
  P\{U(L)\}\leq 1-p^{n(L+1)}<1,
\end{equation*}
\begin{equation*}
\begin{split}
  P\{U\} & =P\bigg\{\bigcap\limits_{m=1}^{\infty}U(mL)\bigg\}=\lim\limits_{m\rightarrow \infty}P\{U(mL)\} \\
   &= \lim\limits_{m\rightarrow \infty}(1-p^{n(L+1)})^m=0,
\end{split}
\end{equation*}
and hence
$   P\{\text{ (\ref{HKnoise0})-(\ref{xit}) can reach quasi-consensus in finite time}\} 
   =  1-P\{U\}=1. \hfill\Box
$

Fig. \ref{smallnoise0} shows that the noise-free HK dynamics cannot reach consensus for some initial conditions and confidence thresholds, while Theorem \ref{Suff_thm} shows that even tiny noises can eventually make the collective opinions merge into one group. It is known that the consensus behavior of multi-agent with or without noise can be guaranteed by the graph connectivity in some sense during the evolution \cite{Jad2003,Ren2005,Wang2008,Touri2014}.   Therefore, Theorem \ref{Suff_thm} implies that the persistent noise will increase the connectivity of the multi-agent opinion dynamics even when the noise strength is very weak.

\section{A Necessary Condition for Quasi-Consensus}\label{Nece_sec}

Here we will present the analysis that when the noise strength is larger than the critical half confidence threshold, the opinions will almost surely diverge.

\begin{thm}\label{Nece_thm}
Let the initial state $x(0)\in [0,1]^n$ are arbitrarily given.  Suppose the confidence threshold  $\epsilon\in(0,1/3]$.
Assume the random noise $\{\xi_i(t), i\in\mathcal{V},t\geq 1\}$ are all zero-mean and i.i.d. with $E\xi_1^2(1)<\infty$ or independent with $\sup_{i,t}|\xi_i(t)|<\infty, a.s.$.
If there exists a lower bound $q>0$ such that
    $P(\xi_i(t)>\epsilon/2)\geq q$ and $P(\xi_i(t)<-\epsilon/2)\geq q$,
then a.s. the system (\ref{HKnoise0})-(\ref{xit}) cannot reach quasi-consensus.
\end{thm}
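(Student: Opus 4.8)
The plan is to reduce Theorem~\ref{Nece_thm} to a uniform ``local escape'' estimate and then to run a conditional Borel--Cantelli argument over disjoint time blocks. First I would extract a quantitative margin from the hypotheses: since $\{\xi_i(t)>\epsilon/2\}=\bigcup_{k\ge1}\{\xi_i(t)>\epsilon/2+1/k\}$ and $P\{\xi_i(t)>\epsilon/2\}\ge q$, continuity of the probability measure from below yields constants $\delta_0\in(0,\epsilon/2)$ and $q'\in(0,q]$ with
\[
 P\{\xi_i(t)>\tfrac{\epsilon}{2}+\delta_0\}\ge q',\qquad P\{\xi_i(t)<-\tfrac{\epsilon}{2}-\delta_0\}\ge q',\qquad\forall\, i\in\mathcal V,\ t\ge1 .
\]
It then suffices to prove the following \emph{local escape claim}: there exist $K\in\mathbb N$ and $c_0>0$ such that, for every $t\ge0$ and every value of $x(t)$,
\[
 P\Big\{\max_{t\le s\le t+K} d_{\mathcal V}(s)\ \ge\ \epsilon+\delta_0\ \Big|\ \mathcal F_t\Big\}\ \ge\ c_0 .
\]
Indeed, applying this along disjoint blocks $B_m=\{m(K{+}1),\dots,m(K{+}1)+K\}$ gives events $G_m\in\mathcal F_{(m+1)(K+1)}$ with $P(G_m\mid\mathcal F_{m(K+1)})\ge c_0$, whence $P(\bigcap_{m\ge N}G_m^c)=\lim_{M\to\infty}P(\bigcap_{m=N}^M G_m^c)\le\lim_M(1-c_0)^{M-N+1}=0$; thus a.s.\ $d_{\mathcal V}(s)\ge\epsilon+\delta_0$ for infinitely many $s$, so $d_{\mathcal V}=\limsup_t d_{\mathcal V}(t)\ge\epsilon+\delta_0>\epsilon$ a.s., which is precisely statement~(iii) of Definition~\ref{robconsen}.

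To prove the local escape claim I would argue by cases on $d_{\mathcal V}(t)$ and on the location of the opinion interval $[x_{\min}(t),x_{\max}(t)]$ in $[0,1]$, using two elementary facts: (a) the HK averaging preserves order, so after sorting the agents $\widetilde x_1(t)\le\cdots\le\widetilde x_n(t)$ with $\widetilde x_n(t)\ge x_{\max}(t)-\epsilon$ and $\widetilde x_1(t)\le x_{\min}(t)+\epsilon$; and (b) $x_i(t+1)=\min\{1,\max\{0,\widetilde x_i(t)+\xi_i(t+1)\}\}$. The case $d_{\mathcal V}(t)\ge\epsilon+\delta_0$ is trivial (take $s=t$). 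If $d_{\mathcal V}(t)<\epsilon+\delta_0$ (so, since $\epsilon\le1/3$, the whole configuration lies in an interval of length $<1/2$) and moreover $[x_{\min}(t),x_{\max}(t)]\subseteq[\tfrac{\epsilon}{2}+\delta_0,\,1-\tfrac{\epsilon}{2}-\delta_0]$, then on the event $\{\xi_n(t+1)>\tfrac{\epsilon}{2}+\delta_0,\ \xi_1(t+1)<-\tfrac{\epsilon}{2}-\delta_0\}$ (probability $\ge (q')^2$) no clipping affects agents $1$ and $n$, and by (a) $d_{\mathcal V}(t+1)\ge x_n(t+1)-x_1(t+1)\ge(\widetilde x_n(t)+\tfrac{\epsilon}{2}+\delta_0)-(\widetilde x_1(t)-\tfrac{\epsilon}{2}-\delta_0)\ge\epsilon+2\delta_0$, so we win in one step. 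The remaining cases are those in which the configuration is clustered near a boundary, say $x_{\min}(t)<\tfrac{\epsilon}{2}+\delta_0$ (then $\epsilon\le1/3$ forces $x_{\max}(t)<1-\tfrac{\epsilon}{2}-\delta_0$, so it is not simultaneously near the top). There I would push all agents upward: on the event $\{\xi_i(t+1)>\tfrac{\epsilon}{2}+\delta_0\ \forall i\}$ (probability $\ge (q')^n$), either $d_{\mathcal V}(t+1)\ge\epsilon+\delta_0$ and we are done, or the configuration survives with $x_{\min}(t+1)\ge x_{\min}(t)+\tfrac{\epsilon}{2}+\delta_0$ (here $\epsilon\le1/3$ guarantees the relevant values remain below $1$ unless the noise is so large that the cluster has already burst). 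Iterating this, with a symmetric ``push all down'' phase if an overshoot lands the configuration near the top, drives the interval into $[\tfrac{\epsilon}{2}+\delta_0,1-\tfrac{\epsilon}{2}-\delta_0]$ within a bounded number of steps, after which the one-step argument applies; this gives the claim with an explicit $K$ and $c_0=(q')^{nK}$-type constant.

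The genuinely delicate point — the main obstacle — is the ``thin annulus'' of configurations with $\epsilon<d_{\mathcal V}(t)<\epsilon+\delta_0$ sitting near a boundary: there the neighbourhood averages $\widetilde x_i$ need no longer be equal, so a single ``push the extremes apart'' step can lose almost all of its effect (the extreme agents' averages are dragged inward by up to $\epsilon$, and clipping at $0$ or $1$ truncates the gain), while a naive ``push towards the interior'' step can overshoot all the way to the opposite boundary when the noise is large. I would resolve this via the dichotomy that such overshooting can only occur when the noise has substantial mass far from the origin, in which case a single split step already separates two agents across a gap much larger than $\epsilon+\delta_0$ irrespective of position; quantifying this ``far mass'' and combining it with the contraction of the HK averaging on near-clustered configurations is where the zero-mean assumption and the second-moment (respectively boundedness) hypothesis enter. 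For the i.i.d.\ case with only $E\xi_1^2(1)<\infty$, a preliminary Borel--Cantelli/truncation step shows that, off a null set, the noise magnitudes are eventually bounded, reducing everything to the bounded-noise situation and supplying the uniform constants $K$ and $c_0$ needed above.
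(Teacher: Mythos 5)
Your overall strategy (a uniform ``local escape'' estimate over all configurations, followed by a conditional Borel--Cantelli argument over disjoint blocks) is a leguinely different route from the paper's, and the easy cases you treat (interior configurations, and the iterated ``push all up/down'' drift) are handled correctly. But the proposal has genuine gaps exactly where you flag the difficulty, and one of your reductions is false. First, the hard case of the local escape claim --- a near-clustered configuration sitting near a boundary, where pushing inward may overshoot to the opposite boundary and pushing apart may be destroyed by clipping and by the inward drag of the neighbourhood averages --- is only sketched: the proposed dichotomy on ``far mass'' of the noise is never quantified, and no argument is given that the alternating push-up/push-down phases terminate in a \emph{uniformly} bounded number of steps $K$ with a \emph{uniform} probability $c_0$ over all configurations; without that uniformity the block Borel--Cantelli argument does not close. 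Second, the reduction of the i.i.d.\ case with only $E\xi_1^2(1)<\infty$ to bounded noise is wrong: by the second Borel--Cantelli lemma, unbounded i.i.d.\ noise exceeds every fixed level infinitely often a.s.\ (e.g.\ Gaussian noise), so the noise magnitudes are \emph{not} eventually bounded off a null set. (This is repairable by intersecting your good events with $\{\xi\in(\epsilon/2+\delta_0,M]\}$ for a suitable $M$, but as stated the step fails.) Third, the opening margin extraction of uniform $(\delta_0,q')$ is only valid in the identically-distributed case; for merely independent noises with $P\{\xi_i(t)>\epsilon/2\}\ge q$, the distributions may concentrate at $\epsilon/2+1/t$, and no uniform $\delta_0>0$ exists.

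It is worth seeing how the paper avoids your hard case entirely. The paper argues by contradiction \emph{on the quasi-consensus event}: once $d_{\mathcal V}(t)\le\epsilon$ holds from some time on, every agent uses the global average, so the cluster evolves as a clipped random walk driven by $\frac1n\sum_j\xi_j$; the law of the iterated logarithm (Lemma~\ref{xvalergo}) forces this synchronized value to traverse $[0,1]$, hence (using $\epsilon\le1/3$ and the a.s.\ finiteness of large-noise times) to visit $[\epsilon,1-\epsilon]$ infinitely often, and at each such interior visit two agents are split across $\epsilon$ with probability at least $q^n$, after which a conditional product argument gives probability zero. Conditioning on consensus is precisely what removes the thin-annulus/boundary configurations that your unconditional estimate must confront; if you want to keep your (stronger, unconditional) local escape claim, you must actually supply the quantitative overshoot analysis you defer, and correct the two reductions above.
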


Before the proof of Theorem \ref{Nece_thm}, we first check how the noisy opinions behave after they get quasi-consensus. Intuitively, the evolution of opinions can fluctuate slightly when the noise is small, but the next lemma shows that, when it is long enough, the synchronized noisy opinion value will cross the whole interval [0,1] infinite times and thus, implies that the persistent noise may influence the whole opinion dynamics essentially, no matter what the nonzero noise strength is.

For an event sequence $\{B_m, \,m\geq 1\}$, let $\{B_m,\,i.o.\}$ be the set of $\bigcap\limits_{m=1}^{\infty}\bigcup\limits_{s=m}^{\infty}B_s$, where i.o. is the abbreviation of ``infinitely often''   The following lemma is important for our analysis, whose proof can be found in Appendix.

\begin{lem}\label{xvalergo}
Suppose the non-degenerate random noise $\{\xi_i(t), i\in\mathcal{V},t\geq 1\}$ are all zero-mean and i.i.d. with $E\xi_1^2(1)<\infty$ or independent with $\inf_{i,t}E\xi_i^2(t)>0$ and $\sup_{i,t}|\xi_i(t)|<\infty, a.s.$. If there exists a finite time $T$ such that $d_\mathcal{V}(t)\leq \epsilon, t\geq T$, then on $\{T<\infty\}$ it a.s. occurs
$x_i(t)= 0$, i.o. and $x_i(t)= 1$ i.o., for $i\in\mathcal{V}$.
\end{lem}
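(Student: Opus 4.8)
The plan is to exploit the truncation in (\ref{HKnoise0}). On $\{T<\infty\}$ we have $d_\mathcal{V}(t)\le\epsilon$ for every $t\ge T$, so $\mathcal{N}(i,x(t))=\mathcal{V}$ for all $i$, hence $\widetilde x_i(t)=\overline x(t):=\tfrac1n\sum_{j}x_j(t)$ and the dynamics collapses, for $t\ge T$, to the scalar‑driven recursion
\begin{equation*}
  x_i(t+1)=\Pi_{[0,1]}\!\bigl(\overline x(t)+\xi_i(t+1)\bigr),\qquad i\in\mathcal{V},
\end{equation*}
where $\Pi_{[0,1]}(y):=\min\{1,\max\{0,y\}\}$ is the truncation to $[0,1]$. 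The guiding idea is that one time step in which \emph{every} coordinate of the noise is simultaneously below a fixed $-c<0$ pushes $\overline x$ down by at least $c$ (or already to $0$), so a block of $L:=\lceil 1/c\rceil$ consecutive such steps forces $x_i(t)=0$ for all $i$ regardless of the state at the start of the block; symmetrically, a block with all noises above $c$ forces $x_i(t)=1$ for all $i$. Since the noises are independent in time, the second Borel–Cantelli lemma produces infinitely many such blocks almost surely, and intersecting with $\{T<\infty\}$ gives the claim.

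The first step (a), \emph{uniform anticoncentration}, is to produce $c,\eta\in(0,1)$ with $P\{\xi_i(t)>c\}\ge\eta$ and $P\{\xi_i(t)<-c\}\ge\eta$ for all $i,t$. In the i.i.d. case this is immediate, since a non‑degenerate zero‑mean variable satisfies $P\{\xi_1(1)>0\}>0$ and $P\{\xi_1(1)<0\}>0$. In the merely independent case one uses $\inf_{i,t}E\xi_i^2(t)\ge\sigma^2>0$ and $\sup_{i,t}|\xi_i(t)|\le M<\infty$: from $E\xi_i^2(t)=E(\xi_i^+)^2+E(\xi_i^-)^2$ one of the summands is $\ge\sigma^2/2$, and $(\xi^\pm)^2\le M\xi^\pm$ together with $E\xi_i^+(t)=E\xi_i^-(t)$ forces $E\xi_i^\pm(t)\ge\sigma^2/(2M)$; then $E\xi_i^\pm(t)\le c+M\,P\{\pm\xi_i(t)>c\}$ yields the bound with, e.g., $c=\sigma^2/(4M)<1$ and $\eta=\sigma^2/(4M^2)$. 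The second step (b), \emph{a good block clears the state}: on $\{T<\infty\}$ fix $s\ge T$ and assume $\xi_i(s+r)<-c$ for all $i\in\mathcal V$, $1\le r\le L$. Using $\Pi_{[0,1]}(y)\le\max(y,0)$ and the recursion, induction on $r$ gives $\overline x(s+r)\le\max(1-rc,0)$, so $\overline x(s+L-1)\le c$; at the $L$‑th step $\overline x(s+L-1)+\xi_i(s+L)<0$, hence $x_i(s+L)=0$ for every $i$. The mirror computation with $\xi_i(s+r)>c$, using $\Pi_{[0,1]}(y)=\min(y,1)$ for $y\ge0$, gives $\overline x(s+L-1)\ge1-c$ and then $x_i(s+L)=1$ for every $i$.

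The third step (c) is the \emph{Borel–Cantelli} part. For $k\ge0$ set $C_k^-=\bigcap_{i\in\mathcal V}\bigcap_{r=1}^{L}\{\xi_i(kL+r)<-c\}$. Distinct $C_k^-$ involve disjoint families of the independent variables $\xi_i(\cdot)$, so they are independent, and $P\{C_k^-\}\ge\eta^{nL}>0$ uniformly; thus $\sum_k P\{C_k^-\}=\infty$ and $P\{C_k^-\ \text{i.o.}\}=1$. Intersecting with $\{T<\infty\}$ and applying (b) to every $k$ with $kL\ge T$ for which $C_k^-$ occurs — there are infinitely many — gives $x_i(kL+L)=0$, i.e. $x_i(t)=0$ i.o. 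The events $C_k^+$ (all noises $>c$) give $x_i(t)=1$ i.o. in the same way.

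The main delicacy, and the reason the argument must be arranged in exactly this order, is that $\{T<\infty\}$ is \emph{not} measurable with respect to the natural filtration $\sigma(\xi_i(s):i\in\mathcal V,\,s\le t)$ — it constrains the whole future trajectory — so one cannot simply condition on the past at time $kL$ and run a conditional Borel–Cantelli argument for the good blocks. The device that sidesteps this is that the ``infinitely many good blocks'' statement in step (c) concerns the noise variables only, where independence is genuine, so it holds with unconditional probability one; only afterwards does one intersect with $\{T<\infty\}$, on which the collapsed recursion of step (b) is valid for all $t\ge T$. A secondary point is step (a) in the non‑identically‑distributed case: obtaining $c,\eta$ \emph{uniform} in $(i,t)$ is precisely where $\inf_{i,t}E\xi_i^2(t)>0$ and $\sup_{i,t}|\xi_i(t)|<\infty$ enter (in the i.i.d. case finiteness of $E\xi_1^2(1)$ is not actually needed for this lemma).
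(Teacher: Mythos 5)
Your proof is correct, and it takes a genuinely different route from the paper's. The paper proves the lemma by introducing the untruncated companion process $y_i(t+1)=\frac1n\sum_j y_j(t)+\xi_i(t+1)$, which reduces to a random walk driven by $\eta_k=\frac1n\sum_j\xi_j(k)$, invokes the Law of the Iterated Logarithm to get $\limsup_t S_t=+\infty$ and $\liminf_t S_t=-\infty$ a.s., and then transfers this to the truncated process $x$ via a pathwise comparison ($y_i(t)\le x_i(t)$ on the event that $x_i$ never reaches $1$ after time $m$). Your argument instead stays entirely inside the truncated dynamics: a run of $L=\lceil 1/c\rceil$ steps in which every noise coordinate is below $-c$ (resp.\ above $c$) deterministically drives the collapsed recursion to $0$ (resp.\ $1$), and the second Borel--Cantelli lemma over disjoint time blocks supplies infinitely many such runs. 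What your approach buys: it is more elementary (no LIL), it avoids the somewhat delicate comparison step between $x$ and the untruncated $y$, it handles the pathwise intersection with $\{T<\infty\}$ cleanly without any conditioning (a point you rightly flag), and in the i.i.d.\ case it dispenses with the moment condition $E\xi_1^2(1)<\infty$. What the paper's approach buys is a stronger intermediate fact (the driving random walk itself oscillates to $\pm\infty$), though that strength is not needed for the stated conclusion. Both proofs read the hypothesis $\sup_{i,t}|\xi_i(t)|<\infty$ a.s.\ as a uniform deterministic bound $M$ (the paper does exactly the same when feeding $d$ into the LIL), so you are on equal footing there; your uniform anticoncentration step (a) and the block induction in step (b), including the bound $1-(L-1)c\le c$, check out.
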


\noindent\textbf{Proof of Theorem \ref{Nece_thm}.} We only need to prove the independent case where we only need to prove that, for any $T\geq 0$, there exists $t\geq T$ a.s., such that $d_\mathcal{V}(t)>\epsilon$ when $\epsilon\leq 1/3$, i.e.,
\begin{equation*}
  P\bigg\{\bigcup\limits_{T=0}^{\infty}\{d_\mathcal{V}(t)\leq\epsilon, t\geq T\}\bigg\}=0.
\end{equation*}
Given arbitrary
$T<\infty$, we consider on $\{d_\mathcal{V}(t)\leq\epsilon, t\geq T\}$. It is easy to see that
\begin{equation}\label{deparcase1}
\begin{split}
   & P\bigg\{d_\mathcal{V}(t)\leq\epsilon,\bigcap\limits_{k=1}^\infty\bigcup\limits_{i\in\mathcal{V}}\{\xi_i(t_{m_k})>\epsilon\}\bigg\} \\
   = & P\bigg\{d_\mathcal{V}(t)\leq\epsilon,\bigcap\limits_{k=1}^\infty\bigcup\limits_{i\in\mathcal{V}}\bigcap\limits_{j\in\mathcal{V}}\{\xi_i(t_{m_k})>\epsilon,\xi_j(t_{m_k})\geq 0\}\bigg\}\\
   \leq & P\bigg\{\bigcap\limits_{k=1}^\infty\bigcap\limits_{j\in\mathcal{V}}\{\xi_j(t_{m_k})\geq 0\}\bigg\} = \prod\limits_{k=1}^\infty P\bigg\{\bigcap\limits_{j\in\mathcal{V}}\xi_j(t_{m_k})\geq 0\bigg\}\\
   \leq & \prod\limits_{k=1}^\infty\prod\limits_{j\in\mathcal{V}}(1-P\{\xi_j(t_{m_k})< -\epsilon/2\})\leq  \lim\limits_{k\rightarrow\infty}(1-q)^k=0.
\end{split}
\end{equation}
Similarly,
\begin{equation}\label{deparcase2}
  P\bigg\{d_\mathcal{V}(t)\leq\epsilon,\bigcap\limits_{k=1}^\infty\bigcup\limits_{i\in\mathcal{V}}\{\xi_i(t_{m_k})<-\epsilon\}\bigg\}=0.
\end{equation}
(\ref{deparcase1}) and (\ref{deparcase2}) imply that a.s. there are at most only finite times that the noise strength exceed $\epsilon$ when the system reach quasi-consensus. Then
by Lemma \ref{xvalergo}, there must exist an infinite time sequence $T_1,T_2,\ldots$ such that for $k=1,2,\ldots$
\begin{equation*}
  x_i\bigg(T+\sum\limits_{j=1}^kT_j\bigg)\in [\epsilon,1-\epsilon],\,\,\,i\in\mathcal{V}.
\end{equation*}
Denote $M(t)=\{u\in\mathcal{V}|x_u(t)\leq x_v(t),v\in\mathcal{V}\}, t\geq 0$ and note that
\begin{equation}\label{noiseposi2}
P\{\xi_i(t)>\epsilon/2\}\geq q\,\,\, \text{and}\,\,\,P\{\xi_i(t)<-\epsilon/2\}\geq q,
\end{equation}
for $i\in\mathcal{V}, t>0$. Thus by independence of $\{\xi_i(t),i\in\mathcal{V},t> 0\}$, we have for $\alpha\in M(t), \beta\in \mathcal{V}-M(t), j\geq 1$
\begin{equation}\label{posidepar}
\begin{split}
  & P\{d_\mathcal{V}(T+T_j+1)>\epsilon\} \\
   \geq & P\{\xi_\alpha(T+T_j+1)<-\epsilon/2, \xi_\beta(T+T_j+1)>\epsilon/2\} \\
   =& \sum\limits_{s=0}^{\infty}P\{\xi_\alpha(s+1)<-\epsilon/2,\xi_\beta(s+1)>\epsilon/2|T+T_j=s\} \cdot P\{T+T_j=s\}\\
   =& \sum\limits_{s=0}^{\infty}P\{\xi_\alpha(s+1)<-\epsilon/2,\xi_\beta(s+1)>\epsilon/2\}\cdot P\{T+T_j=s\}\\
   \geq & q^n>0.
\end{split}
\end{equation}
Denote $E_0=\Omega$ and for $j\geq 1$
\begin{equation*}
  E_j=\{\omega: d_\mathcal{V}(t)\leq\epsilon,\,t\in[T+T_j+1,T+T_j+T_{j+1}]\},
\end{equation*}
then
\begin{equation*}
\begin{split}
  P\bigg\{E_j\bigg|\bigcap\limits_{l<j}E_l\bigg\} &  \leq P\{d_\mathcal{V}(T+T_j+1)\leq\epsilon\}\leq 1-q^n<1.
\end{split}
\end{equation*}
It follows that
\begin{equation*}
\begin{split}
&P\{d_\mathcal{V}(t)\leq\epsilon,\,t\geq T\}\\
\leq &P\bigg\{\bigcap_{j\geq 1}E_j\bigg\}= \lim\limits_{m\rightarrow \infty}\prod\limits_{j=1}^mP\bigg\{E_j\bigg|\bigcap\limits_{l<j}E_l\bigg\}
\leq \lim\limits_{m\rightarrow \infty}(1-q^n)^m=0.
\end{split}
\end{equation*}
This completes the proof.
\hfill $\Box$

\section{Simulations}\label{Simulations}

In this section, we provide numerical analysis for system (\ref{HKnoise0})-(\ref{xit}) to verify our theoretical results.

Note that, when the noise strength is below the critical half of confidence threshold but very strong, the noise will play a leading role in the collective behavior. In this case, it is no surprise that strong opinion fluctuation occurs as a quasi-consensus. Therefore, in what follows, we focus on case when the noise strength is weak, which shows clearly how a ``consensus" can be achieved by infected or injected noises.
Take $n=20$, $\epsilon=0.2$ and the initial conditions uniformly distributed in $[0,1]$. The Fig. \ref{smallnoise0} shows the evolution of noise-free opinions, where three groups are formed.
Take $\xi_i(t),\,i=1,\ldots,n,\,t\geq 1$ as the noises uniformly distributed in $[-\delta,\delta]$. Consider Theorem \ref{robconsthm0} (i) and let $\delta=0.1\epsilon$, and then Fig. \ref{largernoise} shows that all opinions merge into one group.

Next, we show that, when noise strength exceeds the critical value, the opinions will diverge. For simplicity, we set the initial opinions to be identical, and show the noise with strength larger than $0.5\epsilon$ will separate them at some moment. Also, when the confidence threshold is large, the noise strength is very strong that all the opinions fluctuate sharply, though they may get separated. Therefore, we take a small confidence threshold for better illustration. Let $n=10$, $\epsilon=0.01$, all initial opinion values be $0.5$, and $\delta=0.6\epsilon>0.5\epsilon$ by Theorem \ref{robconsthm0} (ii), and then it is shown that the gathered opinions diverge at some time in Fig. \ref{largedivide}. Clearly, some of our simulation results are consistent with the simulation results given in \cite{Mas2010,Pineda2011,Pineda2013}.


\begin{figure}[ht]
  \centering
  \includegraphics[width=2.5in]{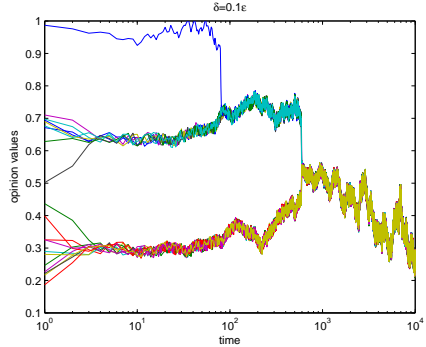}\\
  \caption{The opinion evolution of (\ref{HKnoise0})-(\ref{xit}) when all agents are noise-infected with $\delta=0.1\epsilon$. The initial states and confidence threshold are the same as those in Fig. \ref{smallnoise0}. It can be seen that the opinions merged in finite time and (\ref{HKnoise0})-(\ref{xit}) reaches quasi-consensus. }\label{largernoise}
\end{figure}

\begin{figure}[ht]
  \centering
  \includegraphics[width=2.5in]{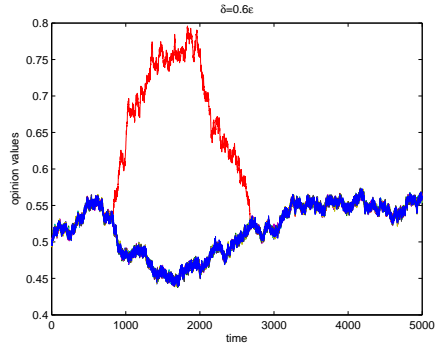}\\
  \caption{The opinion evolution of (\ref{HKnoise0})-(\ref{xit}) when all agents are noise-infected with $\delta=0.6\epsilon$. It illustrates that some opinions get separated from the others at some time when noise strength exceeds the critical value though they are gathered at the beginning.}\label{largedivide}
\end{figure}

\section{Conclusions}\label{Conclusions}

The agreement/disagreement analysis of opinion dynamics has become more and more important in recent years.   Based on the observation that random noise plays a key role in the opinion evolution in many cases, we have provided a strict analysis for the proposed noisy HK opinion model based on stochastic process and multi-agent network. Particularly, we proved that the random noise could almost surely induce the HK dynamics to achieve ``consensus'' (quasi-consensus), and also gave the critical value about the noise strength.  In fact, although the strict mathematical investigation of the noisy confidence-based opinion models is very hard and rare, we have developed a theoretical method for the noisy HK model, which may be later
extended to other confidence-based opinion dynamics.

\section*{Acknowlegment}
We would like to give our warm thanks and appreciation to Prof. Lei Guo for his valuable advice on the whole work, and Profs. Y. G. Yu and L. P. Mo for constructive discussion and modification in the proofs.

\appendix
\section{Proof of Lemma \ref{xvalergo}}
\begin{lem}\label{kollil}[\cite{Chow1997}]
(Law of the Iterated Logarithm) Let $\{X_t, t\geq 1\}$ be random variables with $EX_t=0, EX_t^2=\sigma_t^2<\infty, s_t^2=\sum\limits_{i=1}^t\sigma_i^2\rightarrow\infty$. If (i) $\{X_t,t\geq 1\}$ are i.i.d., or (ii) independent with $|X_t|\leq d_t,\,a.s.$, where the constant $d_t=o(s_t/(\log\log s_t)^{1/2})$ as $t\rightarrow\infty$, then, setting $S_t=\sum\limits_{i=1}^{t}X_i$,
\begin{equation*}
\begin{split}
  & P\bigg\{\varlimsup\limits_{t\rightarrow\infty}\frac{S_t}{s_t\sqrt{\log\log s_t}}= \sqrt{2}\bigg\}\\
  =&P\bigg\{\varliminf\limits_{t\rightarrow\infty}\frac{S_t}{s_t\sqrt{\log\log s_t}}=-\sqrt{2}\bigg\} =1.
\end{split}
\end{equation*}
\end{lem}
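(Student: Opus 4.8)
\section*{Proof proposal for Lemma \ref{xvalergo}}

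The plan is to reduce the behavior of the clipped opinion dynamics, once quasi-consensus is attained, to a single scalar random walk whose partial sums are controlled by the Law of the Iterated Logarithm (Lemma \ref{kollil}). The key observation is that on the event $\{d_\mathcal{V}(t)\leq\epsilon,\,t\geq T\}$, all agents share the same neighbor set (the whole of $\mathcal{V}$), so by \eqref{dtitera_0} each $\widetilde{x}_i(t)$ equals the common average $\bar{x}(t)=\frac{1}{n}\sum_{j}x_j(t)$. Thus the pre-clipping update for every agent is driven by the \emph{same} deterministic center $\bar{x}(t)$ perturbed by its own noise $\xi_i(t+1)$, and the spread among agents stays within $\epsilon$ forever after $T$. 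The first step is therefore to establish this structural reduction rigorously and to argue that it suffices to show the common center, or equivalently any single agent's trajectory, reaches both boundary values $0$ and $1$ infinitely often.

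Next I would set up the comparison with an unclipped random walk. Define $S_t=\sum_{s=T+1}^{t}\xi_i(s)$ and observe that in the absence of clipping a single agent's displacement from $\bar{x}(T)$ is governed by these partial sums up to the averaging contraction. Under either hypothesis --- i.i.d. with $E\xi_1^2(1)<\infty$, or independent with $\inf_{i,t}E\xi_i^2(t)>0$ and the uniform a.s. bound $\sup_{i,t}|\xi_i(t)|<\infty$ --- the variance sum $s_t^2=\sum_{s}\sigma_s^2$ diverges linearly, and in the bounded-independent case the uniform bound $d_t=O(1)$ trivially satisfies $d_t=o(s_t/(\log\log s_t)^{1/2})$. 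Hence Lemma \ref{kollil} applies and gives $\varlimsup S_t=+\infty$ and $\varliminf S_t=-\infty$ a.s.; in particular the unclipped walk would exceed any fixed level infinitely often in both directions.

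The crux is converting this unbounded oscillation of the \emph{unclipped} sums into the statement that the \emph{clipped} value actually equals $0$ and $1$ infinitely often. Here I would argue by contradiction or by a pigeonhole/recurrence argument: since the noise has a fixed positive variance and the clipping confines the state to $[0,1]$, the accumulated ``push'' $S_t$ cannot grow without bound unless the state repeatedly saturates at a boundary --- every unit of net upward drift that is not realized as motion must be absorbed by the clipping at $1$. More precisely, if $x_i(t)$ failed to hit $1$ after some time, then no upward clipping would occur thereafter, so the increments would accumulate freely and $S_t$ would force $x_i(t)$ above $1$, a contradiction with $\varlimsup S_t=+\infty$. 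The symmetric argument at $0$ uses $\varliminf S_t=-\infty$. I expect this absorption/contradiction step to be the main obstacle, because one must carefully track how the averaging operation (which pulls each agent toward $\bar{x}(t)$) interacts with the noise and the clipping, and ensure the argument is uniform over all $i\in\mathcal{V}$ rather than for a single agent.

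Finally I would handle the conditioning on $\{T<\infty\}$: since $T$ is a stopping-type time and the noises beyond $T$ retain their independence and distributional assumptions, the LIL conclusion holds on $\{T<\infty\}$ by decomposing over the countably many values of $T$ and applying the strong Markov-type independence of the future noise from $\mathcal{F}_T$. Combining the per-agent boundary-hitting statements over the finite set $\mathcal{V}$ then yields $x_i(t)=0$ i.o. and $x_i(t)=1$ i.o. for every $i\in\mathcal{V}$ a.s. on $\{T<\infty\}$, as claimed.
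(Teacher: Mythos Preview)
Your proposal addresses the wrong statement. The statement quoted above is Lemma~\ref{kollil}, the Law of the Iterated Logarithm, which the paper does \emph{not} prove at all: it is quoted verbatim from Chow--Teicher \cite{Chow1997} as a black-box classical result (Hartman--Wintner in case (i), Kolmogorov in case (ii)). There is nothing to compare your argument against, because the paper's ``proof'' is simply a citation.

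What you have actually sketched is a proof of Lemma~\ref{xvalergo}, the statement that the clipped opinions hit both boundaries infinitely often once quasi-consensus holds. Viewed as such, your overall strategy matches the paper's: reduce to a scalar walk, apply the LIL to get unbounded oscillation, and then transfer this to the clipped process. Two points of divergence are worth noting. First, your partial sums $S_t=\sum_{s>T}\xi_i(s)$ track a \emph{single} agent's noise, but after quasi-consensus every agent is pulled to the common average $\bar x(t)$ at each step, so the drift of the center is governed by $\eta_k=\frac{1}{n}\sum_j\xi_j(k)$; the paper applies the LIL to $\sum_k\eta_k$, and this is the correct object for the comparison argument. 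Second, your absorption/contradiction step (``if $x_i$ never hits $1$, no upward clipping occurs, so $S_t$ would push $x_i$ past $1$'') is the right intuition but is not airtight as stated: even if agent $i$ stays below $1$, other agents may still be clipped at $1$, which alters $\bar x(t)$ and hence $x_i(t+1)$. The paper handles this by introducing the fully \emph{unclipped} auxiliary process $y_i$ with the same noises, showing $y_i(t)\le x_i(t)$ on the event $\{x_i(s)<1,\,s\ge m\}$ via a monotone-coupling argument, and then using $\limsup y_i=+\infty$ to force a contradiction. Your sketch would need this coupling made explicit to close the gap you yourself flag as ``the main obstacle.''
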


\noindent\textbf{Proof of Lemma \ref{xvalergo}.}
We only need to prove the case when the noise is independent, and the i.i.d. case can be obtained similarly. Without loss of generality, we assume $T=0$ a.s. and only need to prove $P\{x_i(t)= 1,  i.o.\}=1$.   Consider another modified noisy HK model as follows
\begin{equation}\label{HKnoise1}
  y_i(t+1) =  |\mathcal{N}(i, y(t))|^{-1}\sum\limits_{j\in \mathcal{N}(i, y(t))}y_j(t)+ \xi_i(t+1),
\end{equation}
for $i\in\mathcal{V}, t\geq 0$.   According to the proof of Lemma \ref{robconspeci}, we have that, for any $i\in\mathcal{V}, \, t>0$,
\begin{equation}\label{ytevolu}
\begin{split}
  & y_i(t+1) \\
  =&\frac{1}{n}\sum\limits_{j=1}^{n}y_j(t)+\xi_i(t+1)\\
  =& \frac{1}{n}\sum\limits_{j=1}^{n}(\frac{1}{n}\sum\limits_{k=1}^{n}y_k(t-1)+\xi_j(t))+\xi_i(t+1)\\
  =& \frac{1}{n}\sum\limits_{j=1}^{n}y_j(t-1)+\frac{1}{n}\sum\limits_{j=1}^{n}\xi_j(t)+\xi_i(t+1) \\
  =&\frac{1}{n}\sum\limits_{j=1}^{n}y_j(0)+\sum\limits_{k=1}^{t}\frac{1}{n}\sum\limits_{j=1}^{n}\xi_j(k)+\xi_i(t+1).
\end{split}
\end{equation}
Denote $\eta_k=\frac{1}{n}\sum\limits_{j=1}^{n}\xi_j(k),\,k\geq 1$ and suppose $\sup_{i,t}|\xi_i(t)|\leq d<\infty,a.s.,\inf_{i,t}E\xi_i^2(t)\geq c>0$.   Then $\{\eta_k, \, k\geq 1\}$ are mutually independent and
$|\eta_k| \leq d$ a.s., $E\eta_k =0$, and
\begin{equation*}
    E\eta_k^2 =\frac{1}{n^2}\sum\limits_{j=1}^{n}E\xi_j^2(k)\geq \frac{1}{n}c,
\end{equation*}
which implies
\begin{equation*}
  E\eta_k^2\in\bigg[\frac{c}{n}, \frac{d^2}{n}\bigg].
\end{equation*}
Let $S_t=\sum\limits_{k=1}^{t}\eta_k$, and then
\begin{equation*}
 s_t^2=ES_t^2=\sum\limits_{k=1}^{t}E\eta_k^2\in\bigg[\frac{tc}{n},\frac{td^2}{n}\bigg].
\end{equation*}
Hence, $\lim_{t\to\infty} s_t^2 = +\infty$ and
\begin{equation*}
  \frac{d}{(s_t/\sqrt{\log\log s_t})}  \leq \frac{d\sqrt{\log\log(\sqrt{td^2/n})}}{\sqrt{tc/n}}\rightarrow 0,
\end{equation*}
as $t\rightarrow \infty$.
According to Lemma~\ref{kollil},
\begin{equation}\label{stinfty}
   P\{\limsup_{t\rightarrow\infty} S_t=+\infty\}=  P\{\liminf_{t\rightarrow\infty} S_t=-\infty\}=1.
\end{equation}
Again with (\ref{ytevolu}), we have that, for any $y_i(0)\in[0,1]$,
\begin{equation}\label{ytinfty}
 P\{\limsup_{t\rightarrow\infty}y_i(t)=+\infty\}
   =  P\{\liminf_{t\rightarrow\infty}y_i(t)=-\infty\}=1.
\end{equation}
As a result,
\begin{equation}\label{xoscinf}
\begin{split}
  &P\{x_i(t)=1,i.o.\}\\
   =& P\bigg\{\bigcap\limits_{m=1}^{\infty}\bigcup\limits_{s=m}^{\infty}\{x_i(s)=1\}\bigg\}
    = 1-P\bigg\{\bigcup\limits_{m=1}^{\infty}\bigcap\limits_{s=m}^{\infty}\{x_i(s)<1\}\bigg\}\\
    =& 1-P\bigg\{\lim\limits_{m\rightarrow \infty}\bigcap\limits_{s=m}^{\infty}\{x_i(s)<1\}\bigg\}
    = 1-\lim\limits_{m\rightarrow \infty}P\bigg\{\bigcap\limits_{s=m}^{\infty}\{x_i(s)<1\}\bigg\}\\
    \geq & 1-\lim\limits_{m\rightarrow \infty}P\bigg\{\bigcap\limits_{s=m}^{\infty}\{y_i(s)<1|y_i(m)=x_i(m)<1\}\bigg\},
\end{split}
\end{equation}
where the second equality from last holds since the sequence $\{\bigcap\limits_{s=m}^{\infty}\{x_i(s)<1\},m\geq 1\}$ is increasing, and $P$ is a probability measure.
The last equality holds since Lemma \ref{monosmlem}, (\ref{HKnoise0}), (\ref{ytevolu}) and Lemma \ref{robconspeci} imply that $y_i(t)\leq x_i(t), t\geq m$ on $\bigcap\limits_{s=m}^{\infty}\{x_i(s)<1\}$ and hence, $\bigcap\limits_{s=m}^{\infty}\{x_i(s)<1\}\subset\bigcap\limits_{s=m}^{\infty}\{y_i(s)<1\}$, if letting $y_i(m)=x_i(m)$ for each $m\geq 1$.
From (\ref{ytinfty}), we obtain
\begin{equation}\label{brcrinequ}
\begin{split}
  P\bigg\{\bigcap\limits_{s=m}^{\infty}\{y_i(s)<1\}|y_i(m)=x_i(m)<1\bigg\}=0.
\end{split}
\end{equation}
Thus, by (\ref{xoscinf}) and (\ref{brcrinequ}), we have
$
 P\{x_i(t)= 1, \,  i.o.\}  =1. \hfill\Box
$

\end{document}